\def\R{\mathbb{R}}
\def\P{\mathcal{P}}
\def\ds{\mathrm{d}s}
\def\B{\mathcal{B}}
\def\ds{\displaystyle}
\def\K{\mathcal{K}}
\newtheorem{theorem}{Theorem}
\newtheorem{corollary}{Corollary}
\newtheorem{lemma}{Lemma}
\newtheorem{definition}{Definition}
\newtheorem{example}{Example}
\numberwithin{equation}{section}
\numberwithin{theorem}{section}
\numberwithin{definition}{section}
\numberwithin{lemma}{section}
\numberwithin{corollary}{section}
\numberwithin{proposition}{section}
\numberwithin{remark}{section}
\numberwithin{example}{section}
\numberwithin{table}{section}
\newcounter{romnum}
\begin{document}

\title{EXISTENCE OF POSITIVE SOLUTIONS OF A HAMMERSTEIN INTEGRAL EQUATION USING THE LAYERED COMPRESSION-EXPANSION FIXED POINT THEOREM}


\author[S. Dhar]{Sougata Dhar}
	\address{Department of Mathematics, University of Connecticut, Storrs, CT 06269 USA}
	\email{sougata.dhar@uconn.edu}
	
\author[J. W. Lyons]{Jeffrey W. Lyons}
	\address{The Citadel, Charleston, SC 29409, USA}
	\email{jlyons3@citadel.edu}
	
\author[J. T. Neugebauer]{Jeffrey T. Neugebauer}
	\address{Department of Mathematics and Statistics, Eastern Kentucky University, Richmond, KY 40475, USA}
	\email{Jeffrey.Neugebauer@eku.edu}		

\keywords{Hammerstein equation, layered compression-expansion, fixed point, positive solution, symmetric solution}

\subjclass{45G10, 34B15, 34B27, 47H10}

\address{$^1$ Department of Mathematics, University of Connecticut, Storrs, CT 06269, USA, \\ 
$^2$ The Citadel, Charleston, SC 29409, USA,\\
$^3$ Department of Mathematics and Statistics, Eastern Kentucky University, Richmond, KY 40475, USA}

\begin{abstract} 
In this paper, we show the existence of a positive solution of a Hammerstein integral equation under certain conditions on the kernel.  We apply the recent Layered Compression-Expansion Fixed Point Theorem. Finally, we provide corollaries to help in application of the main results and present an example.
\end{abstract}
\maketitle

\setcounter{equation}{0}
\section{Introduction}

Let $T_1,T_2\in\R$ with $T_1<T_2$. Consider the Hammerstein integral equation
\begin{equation}\label{1}
x(t)=\int^{T_2}_{T_1}G(t,\tau)f(x(\tau))\;d\tau, \quad t\in[T_1,T_2],
\end{equation}
where $f\in C([0,\infty),[0,\infty))$ and $f=f_\uparrow +f_\downarrow$ is the sum of monotonic increasing and decreasing functions, respectively. 

Under certain conditions, we show the existence of positive solutions to \eqref{1}. Our approach is to use the Layered Expansion-Compression Fixed Point Theorem \cite{AAH18} with an appropriately defined cone. A couple of additional assumptions yield the existence of positive symmetric solutions to \eqref{1}. Here, we impose symmetry on the problem and investigate the half interval. Once we obtain the existence of a positive solution on the half interval, we extend to the full interval using symmetry.

Avery type fixed point theorems (for some recent examples, see \cite{AAH16, AAH18-1, AAH19, AHL15}) have been used extensively to show the existence of positive solutions and positive symmetric solutions of boundary value problems involving differential equations, difference equations, time scale equations, and fractional differential equations. When Avery type fixed point theorems are applied, they are applied to boundary value problems whose corresponding Green's functions have properties similar to the properties that will be imposed on the kernel $G$; see, for example, \cite{AAHL12,AAH10,AAHL11,
 AAHL12-2,AAH11,AHO08,AAH16,AAH18-1,AGL15,DLN19, HLLN10,LNS12,LN14,LN15, LN15-2,NS17}. Recently, Eloe and Neugebauer \cite{EN19}, motivated by Webb \cite{W17}, applied the Avery type fixed point theorem found in \cite{AAH09} to a Hammerstein integral equation. They found the properties required on the kernel for positive solutions or positive symmetric solutions were properties possessed by the Green's function corresponding to, for example, a fractional boundary value problem, a right focal boundary value problem, a conjugate boundary value problem, and a Lidstone boundary value problem. In this paper, we show that under the same assumptions on the kernel $G$, the Layered Expansion-Compression Fixed Point Theorem can be applied to show the existence of positive and positive symmetric solutions of the Hammerstein integral equation. More examples of recent work on Hammerstein integral equations, can be found in \cite{BIK17,CIP17,IM17,LR19,PR19}.

The remainder of this paper is organized as follows. In Section 2, we present the definitions of convex and concave functionals and the Layered Compression-Expansion Fixed Point Theorem. Our main result for the existence of positive solutions for \eqref{1} is found in Section 3. Finally, we show the existence of positive symmetric solutions for \eqref{1} in Section 4. The paper concludes with an example involving a fourth order boundary value problem with Lidstone boundary conditions.

\section{Preliminaries}

\begin{definition}
Let $E$ be a real Banach space. A nonempty closed convex set $\mathcal{P} \subset E$ is called a cone if for all $x \in \mathcal{P}$ and $\lambda \geq0$, $\lambda x \in \mathcal{P},$ and if $x, -x \in \mathcal{P}$ then $x = 0.$
\end{definition}

\begin{definition}
A map $\alpha$ is said to be a nonnegative continuous concave functional on a cone $\mathcal{P}$ of a real Banach space $E$ if $\alpha : \mathcal{P} \to [0, \infty)$ is continuous and if
$$\alpha(tx + (1-t)y) \geq t\alpha(x) + (1-t)\alpha(y)$$ for all $x, y \in \mathcal{P}$ and $t \in [0,1].$ Similarly, we say the map $\beta$ is a nonnegative continuous convex functional on a cone $\mathcal{P}$ of a real Banach space $E$ if $\beta: \mathcal{P} \to [0, \infty)$ is continuous and if $$\beta(tx + (1-t)y) \leq t\beta(x) + (1-t)\beta(y)$$ for all $x, y \in \mathcal{P}$ and $t \in [0,1].$
\end{definition}

Let $\P$ be a cone, let $u$ and $v$ be real numbers, $\phi$ be a continuous concave functional and $\xi$ a continuous convex functional. Define $$\P(\phi,u,\xi,v)=\{x\in\P:\phi(x)<u \mbox{ and }\xi(x)>v\}.$$

The following is the fixed point theorem that will be utilized.

\begin{lemma}[Layered Compression-Expansion FPT \cite{AAH18}]\label{fpt} Suppose $\P$ is a cone in a real Banach space $E$, $\alpha$ and $\psi$ are nonnegative continuous concave functionals on $\P$, $\beta$ and $\theta$ are nonnegative continuous convex functionals on $\P$, and $R,S,T$ are completely continuous operators on $\P$ with $T=R+S$. If there exist nonnegative real numbers $a,b,c,d$ and $(r_0,s_0)\in\P(\beta,b,\alpha,a)\times\P(\theta,c,\psi,d)$ such that
\begin{enumerate}
\item[(A0)] $\P(\beta,b,\alpha,a)\times\P(\theta,c,\psi,d)$ is bounded;
\item[(A1)] if $r\in\partial\P(\beta,b,\alpha,a)$ with $\alpha(r)=a$ and $s\in\overline{\P(\theta,c,\psi,d)}$, then $\alpha(R(r+s))>a$;
\item[(A2)] if $r\in\partial\P(\beta,b,\alpha,a)$ with $\beta(r)=b$ and $s\in\overline{\P(\theta,c,\psi,d)}$, then $\beta(R(r+s))<b$;
\item[(A3)] if $s\in\partial\P(\theta,c,\psi,d)$ with $\theta(s)=c$ and $r\in\overline{\P(\beta,b,\alpha,a)}$, then $\theta(S(r+s))<c$; and
\item[(A4)] if $s\in\partial\P(\theta,c,\psi,d)$ with $\psi(s)=d$ and $r\in\overline{\P(\beta,b,\alpha,a)}$, then $\psi(S(r+s))>d$;
\end{enumerate}
then there exists an $(r^*,s^*)\in \P(\beta,b,\alpha,a)\times\P(\theta,c,\psi,d)$ such that $x^*=r^*+s^*$ is a fixed point for $T$.
\end{lemma}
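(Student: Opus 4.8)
The plan is to reduce the statement to a single fixed point problem on the product cone and then to invoke the topological fixed point index. First I would form the product space $E\times E$ with the norm $\|(r,s)\|=\max\{\|r\|,\|s\|\}$, observe that $\P\times\P$ is a cone in it, and define the completely continuous operator $\mathcal{T}:\P\times\P\to\P\times\P$ by
\[
\mathcal{T}(r,s)=\bigl(R(r+s),\,S(r+s)\bigr).
\]
The key observation is that any fixed point $(r^*,s^*)$ of $\mathcal{T}$ lying in $D:=\P(\beta,b,\alpha,a)\times\P(\theta,c,\psi,d)$ satisfies $r^*=R(r^*+s^*)$ and $s^*=S(r^*+s^*)$, so that $x^*=r^*+s^*$ obeys $T(x^*)=R(x^*)+S(x^*)=r^*+s^*=x^*$. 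Thus it suffices to produce a fixed point of $\mathcal{T}$ inside $D$.

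Second, I would record the topological features of $D$. Extending the functionals to the product by $\tilde\alpha(r,s)=\alpha(r)$, $\tilde\beta(r,s)=\beta(r)$, $\tilde\theta(r,s)=\theta(s)$, $\tilde\psi(r,s)=\psi(s)$ preserves concavity of $\tilde\alpha,\tilde\psi$ and convexity of $\tilde\beta,\tilde\theta$, and
\[
D=\{(r,s)\in\P\times\P : \tilde\beta(r,s)<b,\ \tilde\alpha(r,s)>a,\ \tilde\theta(r,s)<c,\ \tilde\psi(r,s)>d\}.
\]
By continuity of the four functionals $D$ is relatively open in $\P\times\P$, and by (A0) it is bounded, so the fixed point index $i(\mathcal{T},D,\P\times\P)$ is defined once $\mathcal{T}$ has no fixed point on $\partial D$. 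This is exactly where (A1)--(A4) enter: if $(r,s)\in\partial D$ were fixed, then $r\in\partial\P(\beta,b,\alpha,a)$ or $s\in\partial\P(\theta,c,\psi,d)$; in the first case either $\alpha(r)=a$, whence (A1) gives $\alpha(r)=\alpha(R(r+s))>a$, or $\beta(r)=b$, whence (A2) gives $\beta(r)=\beta(R(r+s))<b$, each a contradiction, and symmetrically (A3)--(A4) exclude $s\in\partial\P(\theta,c,\psi,d)$. It is precisely the uniform quantifiers (``for all $s\in\overline{\P(\theta,c,\psi,d)}$'', ``for all $r\in\overline{\P(\beta,b,\alpha,a)}$'') that make this boundary analysis go through despite the coupling of the components through $r+s$, since the fixed point lies in $\overline{D}=\overline{\P(\beta,b,\alpha,a)}\times\overline{\P(\theta,c,\psi,d)}$.

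Third, and at the heart of the matter, I would show $i(\mathcal{T},D,\P\times\P)\neq 0$. The set $D$ has an ``annular'' shape with four boundary faces: the two \emph{expansion} faces $\tilde\alpha=a$ and $\tilde\psi=d$ (inner), governed by (A1) and (A4), on which $\mathcal{T}$ pushes the concave functionals strictly up, and the two \emph{compression} faces $\tilde\beta=b$ and $\tilde\theta=c$ (outer), governed by (A2) and (A3), on which $\mathcal{T}$ pushes the convex functionals strictly down. Using the standard cone-index lemmas for functional compression and expansion, I would argue that on the outer compression region $\{\tilde\beta<b,\ \tilde\theta<c\}$ the index equals $1$, while the contributions from the expansion cores $\{\tilde\alpha\le a\}$ and $\{\tilde\psi\le d\}$ vanish; additivity and excision of the index then deliver a nonzero index on $D$ as the difference, and hence a fixed point. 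Each functional must be treated on its own face, since only one functional controls each component, and the four conditions cannot be collapsed to a single concave-convex pair (a combined min/max functional would need, e.g., $\alpha(R(r+s))>a$ and $\psi(S(r+s))>d$ simultaneously, which the hypotheses supply only face by face). This irreducibility is the essence of the layered structure.

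Finally, I expect the main obstacle to be the index computation itself in the coupled product setting. Verifying that $\mathcal{T}$ is fixed-point-free on $\partial D$ is routine once the quantifiers are exploited as above, but establishing that the index is nonzero requires constructing explicit homotopies deforming $\mathcal{T}$ to a reference map while remaining fixed-point-free on each of the four faces, and the interaction of the two annular structures---each functional pair constrains only one component, yet $R$ and $S$ both feel $r+s$---is what makes these homotopies delicate. Careful bookkeeping of which face of $\partial D$ each homotopy stage is allowed to touch, together with the verification that the compression and expansion inequalities survive the deformation, is the step I anticipate will consume most of the work.
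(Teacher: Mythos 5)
The paper never proves Lemma \ref{fpt}; it is quoted verbatim from \cite{AAH18}, so there is no in-paper argument to compare yours against, and your proposal must be judged on its own merits. Your first two steps are sound: passing to the product cone $\P\times\P$ with the coupled operator $\mathcal{T}(r,s)=\bigl(R(r+s),S(r+s)\bigr)$, noting that a fixed point of $\mathcal{T}$ in $D=\P(\beta,b,\alpha,a)\times\P(\theta,c,\psi,d)$ yields a fixed point of $T$, and excluding fixed points of $\mathcal{T}$ from $\partial D$ via (A1)--(A4) are all correct, since $\partial(A\times B)=(\partial A\times\overline{B})\cup(\overline{A}\times\partial B)$ and $\partial\{x\in\P:\beta(x)<b\}\subseteq\{x\in\P:\beta(x)=b\}$ for continuous $\beta$, so the quantifiers in (A1)--(A4) match exactly the cases that arise.

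The genuine gap is your third step, and it is conceptual, not just laborious. The excision/additivity scheme --- index $1$ on the outer region $B=\{(r,s)\in\P\times\P:\beta(r)<b,\ \theta(s)<c\}$, index $0$ on the expansion cores, index on $D$ as the difference --- cannot be launched from the stated hypotheses, because (A1)--(A4) constrain $\mathcal{T}$ \emph{only on the four faces of} $\partial D$. They say nothing about $\partial B$: for instance, a pair with $\beta(r)=b$ but $\alpha(r)<a$ satisfies $r\notin\overline{\P(\beta,b,\alpha,a)}$, so (A2) does not apply to it and $\mathcal{T}$ may have fixed points there; consequently $i(\mathcal{T},B,\P\times\P)$ need not even be defined, and the same objection applies to the cores. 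What is missing is the device that handles the coupling --- and it is precisely the hypothesis your proposal never touches, the interior point $(r_0,s_0)\in\P(\beta,b,\alpha,a)\times\P(\theta,c,\psi,d)$. Since $\overline{\P(\beta,b,\alpha,a)}$ and $\overline{\P(\theta,c,\psi,d)}$ are convex (sublevel sets of convex functionals and superlevel sets of concave functionals intersected with the cone), the homotopy
\[
H_\lambda(r,s)=\Bigl(R\bigl(r+(1-\lambda)s+\lambda s_0\bigr),\ S\bigl((1-\lambda)r+\lambda r_0+s\bigr)\Bigr),\qquad \lambda\in[0,1],
\]
keeps its second-slot arguments inside the closed sets over which (A1)--(A4) are quantified, hence is fixed-point free on $\partial D$ for every $\lambda$. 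At $\lambda=1$ the map is the \emph{uncoupled} product $(r,s)\mapsto\bigl(R(r+s_0),S(r_0+s)\bigr)$, so the cartesian product formula applies, and each factor index is nonzero by the single-pair functional compression--expansion results of Leggett--Williams type (the index computations underlying, e.g., \cite{AAH10}) applied to $r\mapsto R(r+s_0)$ on $\P(\beta,b,\alpha,a)$ and $s\mapsto S(r_0+s)$ on $\P(\theta,c,\psi,d)$. With this decoupling your outline closes; without it, the ``difference of indices'' bookkeeping you describe has no foundation.
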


\section{Positive Solutions of the Hammerstein Equation}

The following assumptions are made on $G$.
\begin{enumerate}
\item[(H1)] $G\in C([T_1,T_2]\times[T_1,T_2],[0,\infty))$ and $G(t,s)\not\equiv0$.
\item[(H2)] For each $\tau$, if $t_1,t_2\in[T_1,T_2]$ with $t_1\le t_2$, then $G(t_1,\tau)\le G(t_2,\tau)$.
\item[(H3)] There exists a $k>0$ such that for any $y,w\in[T_1,T_2]$ with $y\le w$,
\begin{equation*}
	(y-T_1)^kG(w,\tau)\le (w-T_1)^kG(y,\tau).
\end{equation*}
\end{enumerate}

Notice assumption (H3) gives that
\begin{equation}\label{gprop}
(y-T_1)^k\int^{T_2}_{T_1}G(w,\tau)\;d \tau\le (w-T_1)^k\int^{T_2}_{T_1}G(y,\tau)\;d \tau.
\end{equation}

Let $\B=C([T_1,T_2],\R)$ be the Banach Space composed of continuous functions defined from $[T_1,T_2]$ into $\R$
with the norm
\begin{equation*}
\|x\|=\max_{t\in[T_1,T_2]}|x(t)|.
\end{equation*}

Define the operator $T:\B\rightarrow\B$ by
$$Tx(t)=\int^{T_2}_{T_1}G(t,\tau)f(x(\tau))\;d \tau, \quad t\in[T_1,T_2].$$
Clearly, fixed points of $T$ are solutions of \eqref{1}.
Denote $R:\B\to \B$ by
$$
Rx(t)=\int^{T_2}_{T_1} G(t,\tau)f_\uparrow(x(\tau))\;d\tau,\quad t\in[T_1,T_2],
$$ and $S:\B\to \B$ by
$$
Sx(t)=\int^{T_2}_{T_1} G(t,\tau)f_\downarrow(x(\tau))\;d\tau,\quad t\in[T_1,T_2].
$$

Define the cone $\P\subset \B$ by
\begin{align*}
\P &=\left\{x\in \mathcal{B} : x \mbox{ is nonnegative, nondecreasing, and} \right.\\
&\phantom{==}\left.(y-T_1)^kx(w)\le (w-T_1)^kx(y)\mbox{ for all } y,w\in[T_1,T_2]\mbox{ with }y\le w \right\}.
\end{align*}

The following proof can be found in \cite{EN19}.
\begin{lemma}\label{conemap} Assume (H1)--(H3). Then the operators $T,R,S:\P\rightarrow\P$ and are completely continuous.
\end{lemma}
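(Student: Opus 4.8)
The plan is to verify that $T$, $R$, and $S$ each map $\P$ into $\P$ and are completely continuous, treating the three operators uniformly where possible since they share the same kernel $G$ and differ only in the nonlinearity ($f$, $f_\uparrow$, or $f_\downarrow$). Fix any of these operators; call it $L$ and its nonlinearity $g \in C([0,\infty),[0,\infty))$. The cone membership claim has three parts to check for $Lx$ when $x \in \P$: nonnegativity, monotonicity, and the weighted inequality defining $\P$. Nonnegativity is immediate from (H1) together with $g \ge 0$. Monotonicity (nondecreasing in $t$) follows directly from (H2): for $t_1 \le t_2$ we have $G(t_1,\tau) \le G(t_2,\tau)$ for every $\tau$, so integrating the nonnegative integrand $g(x(\tau))$ against this pointwise inequality gives $Lx(t_1) \le Lx(t_2)$. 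The weighted inequality follows the same way from (H3): for $y \le w$, multiply the pointwise bound $(y-T_1)^k G(w,\tau) \le (w-T_1)^k G(y,\tau)$ by $g(x(\tau)) \ge 0$ and integrate to obtain $(y-T_1)^k Lx(w) \le (w-T_1)^k Lx(y)$, which is exactly the defining condition of $\P$. I would emphasize that these arguments are insensitive to whether $g$ is increasing or decreasing, so they apply identically to $R$, $S$, and $T = R+S$.

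For complete continuity, I would first establish continuity of $L: \P \to \P$ and then relative compactness of $L$ on bounded subsets, invoking Arzel\`a--Ascoli. For continuity, let $x_n \to x$ in $\B$; since $G$ is continuous on the compact square $[T_1,T_2]\times[T_1,T_2]$ it is bounded, and $g$ is uniformly continuous on the compact range of the $\{x_n\}$ (which lie in a bounded set once $x_n \to x$), so $\sup_\tau |g(x_n(\tau)) - g(x(\tau))| \to 0$; estimating $\|Lx_n - Lx\|$ by $\|G\|_\infty (T_2-T_1) \sup_\tau |g(x_n(\tau))-g(x(\tau))|$ gives continuity. For compactness, fix a bounded set $\Omega \subset \P$ with $\|x\| \le M$ for $x \in \Omega$; then $g(x(\tau))$ is bounded uniformly by $\max_{0\le s \le M} g(s) =: M_g$, so $\|Lx\| \le \|G\|_\infty (T_2-T_1) M_g$ gives uniform boundedness of $L(\Omega)$. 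Equicontinuity follows from the uniform continuity of $G$ on the compact square: given $\e>0$, choose $\d$ so that $|G(t_1,\tau)-G(t_2,\tau)| < \e$ whenever $|t_1-t_2|<\d$, and then $|Lx(t_1)-Lx(t_2)| \le M_g (T_2-T_1)\, \e$ uniformly over $x \in \Omega$.

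The main obstacle, such as it is, lies not in any single estimate but in justifying that the uniform-continuity constants can be chosen independently of $x$ over the relevant set; this is where the compactness of the domain $[T_1,T_2]$ and of the range of a bounded family in $\B$ must be used carefully, so that the continuity modulus of $g$ and the equicontinuity modulus of $G$ are genuinely uniform. Once complete continuity of each of $R$, $S$, and $T$ is established on $\P$, nothing further is needed: the statement about $T = R+S$ follows since a sum of completely continuous operators is completely continuous, and the cone-invariance of $T$ also follows directly from the $f = f_\uparrow + f_\downarrow$ decomposition and linearity of the integral. Since this result is stated to appear already in \cite{EN19}, I would keep the writeup brief, citing that reference for the routine verifications and recording only the key reductions above.
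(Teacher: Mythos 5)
Your proof is correct, and it is the standard argument: cone invariance follows by integrating the pointwise inequalities in (H1)--(H3) against the nonnegative integrand, and complete continuity follows from uniform continuity of $g$ and $G$ on compact sets together with Arzel\`a--Ascoli. The paper itself omits the proof and defers to \cite{EN19}, where essentially this same argument is carried out, so your proposal matches the intended proof.
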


Define $\bar{T}=\dfrac{T_1+T_2}{2}$. Then $\dfrac{T_1+\bar{T}}{2}=\dfrac{3T_1+T_2}{4}$. 

For $x\in\P$, define the nonnegative continuous convex functionals $\beta$ and $\theta$ on $\P$ by
$$
\beta(x)=\max_{t\in[T_1,T_2]}x(t)=x(T_2),
$$ and
$$
\theta(x)=\max_{t\in\left[T_1,\frac{3T_1+T_2}{4}\right]}x(t)=x\left(\frac{3T_1+T_2}{4}\right),
$$
and the nonnegative continuous concave functionals $\alpha$ and $\psi$ on $\P$ by
$$
\alpha(x)=\min_{t\in\left[\frac{3T_1+T_2}{4},T_2\right]}x(t)=x\left(\frac{3T_1+T_2}{4}\right),
$$ and
$$
\psi(x)=x(T_2).
$$

Now, we show the existence of solutions to the Hammerstein equation.

\begin{theorem}\label{main} Assume (H1)--(H3). Let $a,b,c,d$ be nonnegative real numbers with $b>a$. If $f_\downarrow,f_\uparrow:[0,\infty)\to[0,\infty)$ are continuous with

\begin{enumerate}
    \item[(1)] $\ds f_{\uparrow}\left(a+4^{-k}d\right)>a\left(\int_{\frac{3T_1+T_2}{4}}^{T_2}G\left(\frac{3T_1+T_2}{4},\tau\right)d\tau\right)^{-1}$; 
    
    \item[(2)] $\ds f_{\uparrow}\left(b+4^kc\right)<b\left(\int_{T_1}^{T_2}G(T_2,\tau)\;d\tau\right)^{-1}$;
    
    \item[(3)] $\ds f_{\downarrow}(0)<c\left(\int_{T_1}^{T_2}G\left(\frac{3T_1+T_2}{4},\tau\right)d\tau\right)^{-1}$; and
    
    \item[(4)] $\ds f_{\downarrow}(b+d)>d\left(\int_{T_1}^{T_2}G(T_2,\tau)\;d\tau\right)^{-1}$;
\end{enumerate}
then there exists a fixed point $x^*\in\mathcal{P}$ for $T$ which is a solution for the Hammerstein equation \eqref{1}. By construction, $x^*$ is positive.
\end{theorem}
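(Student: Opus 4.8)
The plan is to apply Lemma \ref{fpt} to the decomposition $T=R+S$ with the functionals fixed above; by Lemma \ref{conemap} the operators $R,S,T$ are completely continuous self-maps of $\P$ and $T=R+S$ by construction, so the task reduces to verifying (A0)--(A4). The observation that organizes the whole argument is that, since every element of $\P$ is nonnegative and nondecreasing, the four functionals collapse to point evaluations: on $\P$ one has $\alpha(x)=\theta(x)=x(\eta)$ and $\beta(x)=\psi(x)=x(T_2)$, where $\eta=\frac{3T_1+T_2}{4}$. Because $\eta-T_1=\frac{T_2-T_1}{4}$, the cone inequality evaluated at $(y,w)=(\eta,T_2)$ becomes $4^{-k}x(T_2)\le x(\eta)\le x(T_2)$, and this single chain is the origin of every factor $4^{\pm k}$ appearing in hypotheses (1)--(4).

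For (A0) I would first check boundedness: if $r\in\overline{\P(\beta,b,\alpha,a)}$ then $\|r\|=r(T_2)\le b$, while if $s\in\overline{\P(\theta,c,\psi,d)}$ then the cone inequality gives $\|s\|=s(T_2)\le 4^k s(\eta)\le 4^k c$, so the product is bounded. I also need a witness $(r_0,s_0)$ in the open product set. Since $b>a$, a constant function with value in $(a,b)$ lies in $\P(\beta,b,\alpha,a)$. For $\P(\theta,c,\psi,d)$, the function $t\mapsto C(t-T_1)^k$ belongs to $\P$ (it satisfies the cone inequality with equality) and has value at $\eta$ equal to $4^{-k}$ times its value at $T_2$, so a witness exists exactly when $d<4^k c$. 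The key point is that this is not an extra assumption: since $f_\downarrow$ is decreasing, (3) and (4) give $d\left(\int_{T_1}^{T_2}G(T_2,\tau)\,d\tau\right)^{-1}<f_\downarrow(b+d)\le f_\downarrow(0)<c\left(\int_{T_1}^{T_2}G(\eta,\tau)\,d\tau\right)^{-1}$, and combining this with \eqref{gprop} at $(y,w)=(\eta,T_2)$ yields $d<4^k c$.

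The conditions (A1)--(A4) are then four monotonicity estimates, each isolating a single bound on $(r+s)(\tau)$. For (A1), with $\alpha(r)=a$ I bound $(r+s)(\tau)$ from below on $[\eta,T_2]$: there $r(\tau)\ge r(\eta)=a$, while the cone inequality applied to $s$ gives $s(\tau)\ge\frac{(\tau-T_1)^k}{(T_2-T_1)^k}s(T_2)\ge 4^{-k}d$, so $(r+s)(\tau)\ge a+4^{-k}d$; since $f_\uparrow$ is increasing and $G\ge 0$, restricting the integral for $R(r+s)(\eta)$ to $[\eta,T_2]$ and invoking (1) gives $\alpha(R(r+s))>a$. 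Symmetrically, (A2) uses the upper bound $(r+s)(\tau)\le b+4^k c$ with (2); (A3) uses $(r+s)(\tau)\ge 0$, so $f_\downarrow((r+s)(\tau))\le f_\downarrow(0)$, with (3); and (A4) uses $(r+s)(\tau)\le b+d$, so $f_\downarrow((r+s)(\tau))\ge f_\downarrow(b+d)$, with (4). In each case the monotonicity direction of $f_\uparrow$ or $f_\downarrow$ is exactly what converts the pointwise bound into the desired strict inequality for the functional.

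With (A0)--(A4) in hand, Lemma \ref{fpt} produces $(r^*,s^*)\in\P(\beta,b,\alpha,a)\times\P(\theta,c,\psi,d)$ whose sum $x^*=r^*+s^*$ is a fixed point of $T$, hence a solution of \eqref{1}. For positivity, since $x^*\in\P$ and $s^*\ge 0$, I have $x^*(\eta)\ge r^*(\eta)=\alpha(r^*)>a\ge 0$, so $x^*\not\equiv 0$; the cone inequality (for $t\le\eta$) together with monotonicity (for $t\ge\eta$) then forces $x^*(t)>0$ for every $t\in(T_1,T_2]$. The only genuinely delicate step is the lower estimate $s(\tau)\ge 4^{-k}d$ on $[\eta,T_2]$ used in (A1): unlike the other three bounds it cannot be read off from monotonicity of $s$ alone but requires the cone inequality, and matching its exponent and $4^{-k}$ factor to the argument of $f_\uparrow$ in hypothesis (1) is where the bookkeeping must be done carefully.
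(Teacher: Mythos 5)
Your proof is correct and follows essentially the same route as the paper: the same cone, functionals, and splitting $T=R+S$, with (A0)--(A4) verified by the same monotonicity and cone-inequality estimates (your (A1) applies the cone inequality pointwise at each $\tau$ rather than first dropping to $\frac{3T_1+T_2}{4}$, a cosmetic difference). If anything, your write-up is slightly more complete than the paper's, since you explicitly construct the witness pair $(r_0,s_0)$ required by Lemma \ref{fpt} --- deriving $d<4^k c$ from conditions (3), (4), and \eqref{gprop} --- and you spell out why $x^*$ is positive, both of which the published proof leaves implicit.
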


\begin{proof} 
Our goal is to apply Lemma \ref{fpt} to \eqref{1}.
In particular, we want to show the existence of a fixed point of $T$, which is a solution of \eqref{1}. Let $(r,s)\in\P(\beta,b,\alpha,a)\times\P(\theta,c,\psi,d)$ and $a$, $b$, $c$, and $d$ be nonnegative real numbers. First note that $a<b$, since
$$a<\alpha(r)=r\left(\frac{3T_1+T_2}{4}\right)<r(T_2)=\beta(r)<b.$$

Now, we show that $(A0)$ holds.  Since $(r,s)\in\P(\beta,b,\alpha,a)\times\P(\theta,c,\psi,d)$, we have
$$r(t)\le r(T_2)=\beta(r)< b,$$
and so $\|r\|< b$. Next, $\theta(s)=s\left(\frac{3T_1+T_2}{4}\right)< c$, so since $s\in\P$,
$$\left(\frac{3T_1+T_2}{4}-T_1\right)^ks(T_2)\leq\left(T_2-T_1\right)^k s\left(\frac{3T_1+T_2}{4}\right).$$
Isolate $s(T_2)$, and we have
$$\ds s(T_2)\leq \left(T_2-T_1\right)^k /\left(\frac{3T_1+T_2}{4}-T_1\right)^k s\left(\frac{3T_1+T_2}{4}\right)=4^ks\left(\frac{3T_1+T_2}{4}\right).$$
Therefore,
$$s(t)\le s(T_2)\le4^ks\left(\frac{3T_1+T_2}{4}\right)< 4^k c.$$
Thus, $\|s\|$ is bounded, and so $\P(\beta,b,\alpha,a)\times\P(\theta,c,\psi,d)$ is bounded.

Next, we show $(A1)$ is true. Let $r\in\partial\P(\beta,b,\alpha,a)$ with $\alpha(r)=a$ and $s\in\overline{\P(\theta,c,\psi,d)}$. Since $s\in\P$,
$$s\left(\frac{3T_1+T_2}{4}\right)\ge 4^{-k}s(T_2).$$
Then, since $r$ and $s$ are nondecreasing, for all $\tau\in\left[ \frac{3T_1+T_2}{4},T_2\right]$, it must be the case that
\begin{align*}
r(\tau)+s(\tau )&\ge r\left(\frac{3T_1+T_2}{4}\right)+s\left(\frac{3T_1+T_2}{4}\right)\\&\ge r\left(\frac{3T_1+T_2}{4}\right)+4^{-k}s(T_2)\\&\ge a + 4^{-k}d.
\end{align*}
Since $f_\uparrow$ is increasing, for $\tau\ge \dfrac{3T_1+T_2}{4}$, $f_\uparrow\left(r(\tau)+s(\tau)\right)\ge f_\uparrow\left(a +4^{-k} d\right)$.
So, $(1)$ gives that
\begin{align*}
\alpha\left(R(r+s)\right)&=\int^{T_2}_{T_1} G\left(\frac{3T_1+T_2}{4},\tau\right)f_\uparrow\left(r(\tau)+s(\tau)\right)d\tau\\
&\ge \int^{T_2}_{\frac{3T_1+T_2}{4}} G\left(\frac{3T_1+T_2}{4},\tau\right)f_\uparrow\left(r(\tau)+s(\tau)\right)d\tau\\
&\ge f_\uparrow\left(a + 4^{-k}d\right)\int^{T_2}_{\frac{3T_1+T_2}{4}} G\left(\frac{3T_1+T_2}{4},\tau\right)d\tau\\
&>a.
\end{align*}
So, $(A1)$ holds.

Here we show $(A2)$ holds. Let $r\in\partial\P(\beta,b,\alpha,a)$ with $\beta(r)=b$ and $s\in\overline{\P(\theta,c,\psi,d)}$. Again, note in this case that $s(T_2)\le 4^k c$. Then, since $r$ and $s$ are nondecreasing, for all $\tau\in[T_1,T_2]$, $$r(\tau)+s(\tau)\le r(T_2)+s(T_2)\le b+4^k c.$$ The fact that $f_\uparrow$ is increasing gives that $f_\uparrow\left(r(\tau)+s(\tau)\right)\le f_\uparrow\left(b+4^k c\right)$. By $(2)$,
\begin{align*}
\beta\left(R(r+s)\right)&=\int^{T_2}_{T_1} G(T_2,\tau)f_\uparrow\left(r(\tau)+s(\tau)\right)d\tau\\
&\le f_\uparrow\left(b+4^k c\right)\int^{T_2}_{T_1} G(T_2,\tau)\;d\tau\\
&<b.
\end{align*}
So, $(A2)$ holds.

To show $(A3)$ holds, we start by letting $s\in\partial\P(\theta,c,\psi,d)$ with $\theta(s)=c$ and $r\in\overline{\P(\beta,b,\alpha,a)}$. Notice since $f_\downarrow$ is decreasing, $f_\downarrow(0)\ge f_\downarrow(r(\tau)+s(\tau))$ for all $\tau\in[T_1,T_2]$. So, $(3)$ gives that
\begin{align*}
\theta\left(S(r+s)\right)&=\int^{T_2}_{T_1} G\left(\frac{3T_1+T_2}{4},\tau\right)f_\downarrow(r(\tau)+s(\tau))\;d \tau\\
&\le f_\downarrow(0)\int^{T_2}_{T_1} G\left(\frac{3T_1+T_2}{4},\tau\right)d\tau\\
&<c.
\end{align*}

Finally, we show $(A4)$ holds. Let $s\in\partial\P(\theta,c,\psi,d)$ with $\psi(s)=d$ and $r\in\overline{\P(\beta,b,\alpha,a)}$. Since $r$ and $s$ are nondecreasing, for all
$\tau\in[T_1,T_2],$ we have
$$
r(\tau)+s(\tau)\le r\left(T_2\right)+s\left(T_2\right)\le b+d.
$$

Because $f_\downarrow$ is decreasing,
$f_\downarrow\left(r(\tau)+s(\tau)\right)\ge f_\downarrow\left(b+d\right)$
for $\tau\in[T_1,T_2]$.
By $(4)$,
\begin{align*}
\psi(S(r+s))&=\int^{T_2}_{T_1}G(T_2,\tau)f_\downarrow\left(r(\tau)+s(\tau)\right)d\tau\\
&\ge f_\downarrow\left(b+d\right)\int^{T_2}_{T_1} G(T_2,\tau)\;d\tau\\
&>d.
\end{align*}
Therefore, the conditions of Lemma \ref{fpt} are satisfied, which gives the existence of a fixed point $x^*$ of $T$. This fixed point $x^*$ is a solution of the Hammerstein equation \eqref{1}.

\end{proof}

We present the following two corollaries of Theorem \ref{main} that provide information to assist in choosing the nonnegative real numbers $a$, $b$, $c$, and $d$ in application problems.

\begin{corollary}
    Suppose the conditions of Theorem \ref{main}. Then 
    \begin{enumerate}
        \item [(1)] $d<c\left(\dfrac{\ds\int_{T_1}^{T_2}G(T_2,\tau)\;d\tau}{\ds\int_{T_1}^{T_2}G\left(\dfrac{3T_1+T_2}{4},\tau\right)d\tau}\right)$; and
        
        \item [(2)] if $d<4^{2k}c$, then $a<b\left(\dfrac{\ds\int_{\frac{3T_1+T_2}{4}}^{T_2}G\left(\frac{3T_1+T_2}{4},\tau\right)d\tau}{\ds\int_{T_1}^{T_2}G(T_2,\tau)\;d\tau}\right).$
    \end{enumerate}
\end{corollary}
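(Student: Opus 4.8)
The plan is to chain the four hypotheses of Theorem \ref{main} together using only the monotonicity of $f_\uparrow$ and $f_\downarrow$; no fixed-point machinery is needed here, since both assertions are purely about the numbers $a,b,c,d$ forced by those hypotheses. For part (1), I would combine hypotheses (3) and (4), both of which involve $f_\downarrow$. Since $f_\downarrow$ is decreasing and $b+d\ge 0$, we have $f_\downarrow(b+d)\le f_\downarrow(0)$, and inserting this between the lower bound coming from (4) and the upper bound coming from (3) yields
$$
\frac{d}{\int_{T_1}^{T_2}G(T_2,\tau)\,d\tau} < f_\downarrow(b+d) \le f_\downarrow(0) < \frac{c}{\int_{T_1}^{T_2}G\left(\frac{3T_1+T_2}{4},\tau\right)d\tau}.
$$
Discarding the middle terms and clearing the positive integrals (which are nonzero, as is implicit in the statement of Theorem \ref{main}) gives exactly the claimed bound on $d$.

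For part (2), I would instead link hypotheses (1) and (2), which involve $f_\uparrow$. The extra assumption $d<4^{2k}c$ is precisely what is needed to order the arguments of $f_\uparrow$: multiplying through by $4^{-k}$ gives $4^{-k}d<4^{k}c$, and combining this with $a<b$ (established at the very start of the proof of Theorem \ref{main}) yields $a+4^{-k}d< b+4^{k}c$. Because $f_\uparrow$ is increasing, it follows that $f_\uparrow(a+4^{-k}d)\le f_\uparrow(b+4^{k}c)$, and chaining this with the lower bound from (1) and the upper bound from (2) produces
$$
\frac{a}{\int_{\frac{3T_1+T_2}{4}}^{T_2}G\left(\frac{3T_1+T_2}{4},\tau\right)d\tau} < f_\uparrow(a+4^{-k}d) \le f_\uparrow(b+4^{k}c) < \frac{b}{\int_{T_1}^{T_2}G(T_2,\tau)\,d\tau}.
$$
Solving the resulting outer inequality for $a$ gives the asserted bound.

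The argument is essentially a bookkeeping exercise, so I do not anticipate any substantive obstacle. The one point requiring care is recognizing the role of the side condition $d<4^{2k}c$ in part (2): without it the two arguments of $f_\uparrow$ cannot be compared, the monotonicity step fails, and the chain of inequalities breaks. In part (1) no analogous side condition is required, because the monotonicity of $f_\downarrow$ together with the nonnegativity of $b+d$ already forces $f_\downarrow(b+d)\le f_\downarrow(0)$ unconditionally.
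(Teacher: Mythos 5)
Your proposal is correct and takes essentially the same approach as the paper: the paper offers no details beyond the remark that the corollary ``is direct from the decreasing and increasing properties of $f_\downarrow$ and $f_\uparrow$,'' and your argument---chaining hypotheses (3) and (4) through $f_\downarrow(b+d)\le f_\downarrow(0)$ for part (1), and hypotheses (1) and (2) through $a+4^{-k}d<b+4^{k}c$ (using $a<b$ and the side condition $d<4^{2k}c$) for part (2)---is exactly that monotonicity argument written out in full, with the strict outer inequalities correctly preserving strictness in the conclusion.
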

The proof of this corollary is direct from the decreasing and increasing properties of $f_\downarrow$ and $f_\uparrow$, respectfully.

\begin{corollary}
    Assume (H1)--(H3). Let $b$ and $c$ be positive real numbers. If $f_\downarrow,f_\uparrow:[0,\infty)\to[0,\infty)$ are continuous with
    \begin{enumerate}
    \item[(C1)] $\ds f_{\uparrow}(0)>0$;
    \item[(C2)] $\ds f_{\uparrow}\left(b+4^kc\right)<b\left(\int_{T_1}^{T_2}G(T_2,\tau)\;d\tau\right)^{-1}$;
    
    \item[(C3)] $\ds f_{\downarrow}(0)<c\left(\int_{T_1}^{T_2}G\left(\frac{3T_1+T_2}{4},\tau\right)d\tau\right)^{-1}$; and
    
    \item[(C4)] $\ds f_{\downarrow}(b)>0$;
    \end{enumerate}
    then there exists as least one positive solution of \eqref{1}.
\end{corollary}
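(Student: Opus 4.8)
The plan is to deduce the corollary from Theorem \ref{main} by exhibiting suitable values of the parameters $a$ and $d$. Note first that hypotheses (C2) and (C3) coincide verbatim with conditions (2) and (3) of Theorem \ref{main}, and that neither of these involves $a$ or $d$; hence they hold automatically for any admissible choice. It therefore remains only to produce nonnegative numbers $a,d$ with $a<b$ for which conditions (1) and (4) are satisfied, after which Theorem \ref{main} furnishes a fixed point $x^*$ of $T$, positive by construction, which is the desired solution of \eqref{1}.

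The guiding observation is that (C1) and (C4) are precisely the limiting forms of (1) and (4) as $a\to 0^+$ and $d\to 0^+$: the right-hand sides of (1) and (4) are proportional to $a$ and $d$ and hence vanish in the limit, while the arguments $a+4^{-k}d$ and $b+d$ of $f_\uparrow$ and $f_\downarrow$ tend to $0$ and to $b$. Thus I would first select $d>0$ small. Since $f_\downarrow$ is continuous with $f_\downarrow(b)>0$ by (C4), and the right-hand side $d\left(\int_{T_1}^{T_2}G(T_2,\tau)\,d\tau\right)^{-1}\to 0$ while $f_\downarrow(b+d)\to f_\downarrow(b)>0$ as $d\to 0^+$, condition (4) holds for every sufficiently small $d>0$; moreover, by (C1) and continuity of $f_\uparrow$, I may take $d$ small enough that $f_\uparrow$ is positive at $4^{-k}d$. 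Fixing such a $d$, I would then choose $a>0$ small: as $a\to 0^+$ the left-hand side of (1) tends to $f_\uparrow(4^{-k}d)>0$, whereas its right-hand side $a\left(\int_{(3T_1+T_2)/4}^{T_2}G((3T_1+T_2)/4,\tau)\,d\tau\right)^{-1}\to 0$, so (1) holds for all small $a>0$, and shrinking $a$ further if necessary secures $a<b$.

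With $a,b,c,d$ positive and $a<b$, all hypotheses of Theorem \ref{main} are in force, and the theorem yields the positive solution of \eqref{1}. The argument is essentially a continuity/limiting passage, so no step constitutes a genuine obstacle; the only points requiring attention are the order of selection — $d$ must be fixed before $a$, since condition (1) couples the two parameters through the argument $a+4^{-k}d$ — and the tacit positivity of the integrals appearing as reciprocals, which is already implicit in the statement of Theorem \ref{main}.
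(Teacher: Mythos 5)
Your proof is correct, but it takes a more roundabout route than the paper does. The paper's entire proof is the observation that Theorem \ref{main} permits $a$ and $d$ to be \emph{zero}: the theorem only requires $a,b,c,d$ nonnegative with $b>a$, so one may simply set $a=d=0$, whereupon the right-hand sides of conditions (1) and (4) vanish, those conditions become literally (C1) and (C4), and conditions (2) and (3) are (C2) and (C3) verbatim; nothing further is needed. You instead keep $a$ and $d$ strictly positive and recover (1) and (4) by a continuity/perturbation argument, choosing $d$ small first and then $a$. That argument is sound --- the ordering you impose ($d$ before $a$, because (1) couples the two through $a+4^{-k}d$) is handled correctly, and your appeal to continuity at that step could even be replaced by monotonicity of $f_\uparrow$, since $f_\uparrow\left(a+4^{-k}d\right)\ge f_\uparrow\left(4^{-k}d\right)$. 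What your route buys is a small amount of hygiene: by avoiding the boundary values $a=d=0$ you never form the degenerate products $0\cdot\left(\int G\right)^{-1}$ in (1) and (4), so the only tacit positivity assumptions are the ones already forced by (C2) and (C3), as you note at the end. The cost is an extra limiting argument and bookkeeping that the direct substitution $a=d=0$ renders unnecessary.
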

The proof of this corollary is immediately apparent by choosing $a=d=0$ in Theorem \ref{main}.

\section{Positive Symmetric Solutions of the Hammerstein Equation}

In this section, we seek to show the existence of at least one positive symmetric solution to the Hammerstein equation. Here we mean symmetric in the sense that $x(t)=x(T_2-t+T_1)$ for $t\in[T_1,T_2]$. Our strategy is to divide the interval $[T_1,T_2]$ in half and employ the method of the previous section. If we prove the existence of a positive solution on the half interval, then the symmetry condition will extend the solution to the entire interval.

Recall $\bar{T}=\dfrac{T_1+T_2}{2}$. Define $\bar{\bar{T}}=\dfrac{T_1+\bar{T}}{2}$. Then $\dfrac{T_1+\bar{\bar{T}}}{2}=\dfrac{7T_1+T_2}{8}$. 

Define the cone
\begin{align*}
\K&=\left\{x\in\B:x(T_2-t+T_1)=x(t),\; x\ge0, \mbox{ nondecreasing for } x\in[T_1,\bar{T}]\right.\\
&\phantom{==} \mbox{ and}\left.(y-T_1)^kx(w)\le (w-T_1)^kx(y)\mbox{ for all } y,w\in[T_1,\bar{T}]\mbox{ with }y\le w \right\}.
\end{align*}

We need the following additional assumptions.
\begin{enumerate}
\item[(H4)] Let $t_1,t_2\in\left[T_1,\bar{T}\right]$ with $t_1\le t_2$.
\begin{enumerate}
\item[(i)] If $t_2\le \tau\le T_2-t_2+T_1$, then $G(t_1,\tau)\le G(t_2,\tau)$.
\item[(ii)] If $\tau\le t_2$, then $G(t_1,\tau)+G(T_2-t_1+T_1,\tau)\le G(t_2,\tau)+G(T_2-t_2+T_1,\tau)$.
\end{enumerate}
\item[(H5)] For all $t,\tau\in[T_1,T_2]$, $$G(T_2-t+T_1,T_2-\tau+T_1)=G(t,\tau).$$
\end{enumerate}

\begin{lemma} Assume (H1), (H3)--(H5). The operators $T,R,S:\K\rightarrow\K$ are completely continuous.
\end{lemma}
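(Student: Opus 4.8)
The plan is to mirror the structure of Lemma \ref{conemap}: since $T=R+S$ and $\K$ is a cone (hence closed under addition), it suffices to prove that each of $R$ and $S$ maps $\K$ into $\K$ and is completely continuous, whereupon $T$ inherits both properties. Because the verifications for $R$ and $S$ are identical, with $f_\uparrow$ and $f_\downarrow$ in place of a generic nonnegative continuous function $h$, I would run the cone-invariance argument once for the generic operator $x\mapsto\int_{T_1}^{T_2}G(\cdot,\tau)h(x(\tau))\,d\tau$, observing that \emph{none} of the defining conditions of $\K$ actually requires monotonicity of $h$; only $h\ge 0$ is used. Fix $x\in\K$ and set $g(\tau)=h(x(\tau))\ge0$; since $x$ is symmetric, $g$ is symmetric, i.e.\ $g(T_2-\tau+T_1)=g(\tau)$.

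Nonnegativity of the image is immediate from (H1) and $g\ge0$. For the Harnack-type inequality I would invoke (H3) directly: for $y\le w$ in $[T_1,\bar{T}]\subseteq[T_1,T_2]$, multiply the pointwise inequality $(y-T_1)^kG(w,\tau)\le(w-T_1)^kG(y,\tau)$ by $g(\tau)\ge0$ and integrate, exactly as in \eqref{gprop}, obtaining $(y-T_1)^k(Tx)(w)\le(w-T_1)^k(Tx)(y)$ with no further work. The symmetry $Tx(T_2-t+T_1)=Tx(t)$ is where (H5) enters: in $\int_{T_1}^{T_2}G(T_2-t+T_1,\tau)g(\tau)\,d\tau$ I would substitute $\tau=T_2-\sigma+T_1$, use (H5) to replace $G(T_2-t+T_1,T_2-\sigma+T_1)$ by $G(t,\sigma)$ and the symmetry of $g$ to replace $g(T_2-\sigma+T_1)$ by $g(\sigma)$, recovering $Tx(t)$.

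The genuinely delicate point, and the step I expect to be the main obstacle, is showing that $Tx$ is nondecreasing on $[T_1,\bar{T}]$; this is precisely what (H4) is built for. For $t_1\le t_2$ in $[T_1,\bar{T}]$ (so that $t_2\le\bar{T}\le T_2-t_2+T_1$) I would split $[T_1,T_2]$ into the central band $[t_2,T_2-t_2+T_1]$ and the two outer pieces $[T_1,t_2)$ and $(T_2-t_2+T_1,T_2]$. On the central band, (H4)(i) gives $G(t_1,\tau)\le G(t_2,\tau)$ pointwise. For the outer pieces I would fold the far piece onto the near one via $\tau=T_2-\sigma+T_1$ together with (H5) and the symmetry of $g$, which turns the outer contribution of $G(t_i,\cdot)$ into $\int_{T_1}^{t_2}\bigl[G(t_i,\sigma)+G(T_2-t_i+T_1,\sigma)\bigr]g(\sigma)\,d\sigma$; since $\sigma\le t_2$ throughout, (H4)(ii), which is exactly the hypothesis on the sum $G(t_i,\sigma)+G(T_2-t_i+T_1,\sigma)$, furnishes the needed monotonicity. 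Adding the central and outer comparisons yields $Tx(t_1)\le Tx(t_2)$, so $Tx\in\K$.

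Finally, complete continuity is routine and I would dispatch it as in \cite{EN19}: continuity of $G$ on the compact square $[T_1,T_2]\times[T_1,T_2]$ and continuity of $h$ show that $R,S$, and hence $T$, are continuous and carry bounded subsets of $\K$ to uniformly bounded, equicontinuous families, so the Arzel\`a--Ascoli theorem gives relative compactness of their images.
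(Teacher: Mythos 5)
Your proposal is correct, and it supplies precisely the argument that the paper itself omits by deferring to \cite{EN19}: cone invariance for a generic nonnegative continuous nonlinearity (nonnegativity from (H1), the Harnack-type inequality by integrating (H3) against $h(x(\tau))\,d\tau$, symmetry via the substitution $\tau\mapsto T_2-\tau+T_1$ with (H5), and monotonicity on $[T_1,\bar{T}]$ by splitting into the central band handled by (H4)(i) and the folded outer pieces handled by (H4)(ii)), followed by a standard Arzel\`a--Ascoli argument. This matches the approach of the cited reference, so there is nothing to correct.
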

This proof can also be found in \cite{EN19} and is therefore omitted.

For $x\in\K$, define the nonnegative continuous convex functionals $\beta$ and $\theta$ on $\K$ by
$$
\beta(x)=\max_{t\in[T_1,\bar{T}]}x(t)=x(\bar{T}),
$$ and
$$
\theta(x)=\max_{t\in\left[T_1,\frac{7T_1+T_2}{8}\right]}x(t)=x\left(\frac{7T_1+T_2}{8}\right),
$$
and the nonnegative continuous concave functionals $\alpha$ and $\psi$ on $\K$ by
$$
\alpha(x)=\min_{t\in\left[\frac{7T_1+T_2}{8},\bar{T}\right]}x(t)=x\left(\frac{7T_1+T_2}{8}\right),
$$ and
$$
\psi(x)=x(\bar{T}).
$$

Finally, we present the main result of this section to show the existence of a positive solution on the half interval $[T_1,\bar{T}]$. By symmetry, we extend this solution to the entire interval $[T_1,T_2]$. We omit the proof as it is similar to the proof presented in the previous section.

\begin{theorem}\label{mainsym} Assume (H1), (H3)--(H5). Let $a$, $b$, $c$, $d$ be nonnegative real numbers with $b>a$. If $f_\downarrow,f_\uparrow:[0,\infty)\to[0,\infty)$ are continuous with
\begin{enumerate}
    \item[(1)] $\ds f_{\uparrow}\left(a+4^{-k}d\right)>a\left(\int_{\frac{7T_1+T_2}{8}}^{\bar{T}}G\left(\frac{7T_1+T_2}{8},\tau\right)d\tau\right)^{-1}$; 
    
    \item[(2)] $\ds f_{\uparrow}\left(b+4^kc\right)<b\left(\int_{T_1}^{T_2}G(\bar{T},\tau)\;d\tau\right)^{-1}$;
    
    \item[(3)] $\ds f_{\downarrow}(0)<c\left(\int_{T_1}^{T_2}G\left(\frac{7T_1+T_2}{8},\tau\right)d\tau\right)^{-1}$; and
    
    \item[(4)] $\ds f_{\downarrow}(b+d)>d\left(\int_{T_1}^{\bar{T}}G(\bar{T},\tau)\;d\tau\right)^{-1}$;
\end{enumerate}
then there exists a fixed point $x^*\in\K$ for $T$. Due to the symmetry of elements in $\K$, $x^*$ is a positive symmetric solution for the Hammerstein equation \eqref{1}.
\end{theorem}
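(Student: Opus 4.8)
The plan is to apply Lemma~\ref{fpt} to the cone $\K$, with the operators $R$, $S$, and $T=R+S$ (completely continuous on $\K$ by the preceding lemma) and the four functionals $\beta,\theta,\alpha,\psi$ just defined, verifying (A0)--(A4) exactly as in the proof of Theorem~\ref{main} but with the half-interval geometry. The single algebraic fact driving everything is that
\[
\bar{T}-T_1=\frac{T_2-T_1}{2}=4\left(\frac{7T_1+T_2}{8}-T_1\right),
\]
so the defining inequality of $\K$, applied with $y=\frac{7T_1+T_2}{8}$ and $w=\bar{T}$, yields $s(\bar{T})\le 4^{k}s\!\left(\frac{7T_1+T_2}{8}\right)$ and $s\!\left(\frac{7T_1+T_2}{8}\right)\ge 4^{-k}s(\bar{T})$ for every $s\in\K$. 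These are the precise analogues of the estimates relating $s(T_2)$ and $s\!\left(\frac{3T_1+T_2}{4}\right)$ used in Section~3, so the factors $4^{\pm k}$ in hypotheses (1)--(4) are reproduced verbatim. For boundedness (A0) I would argue as before: $\|r\|$ is controlled by $\beta(r)=r(\bar{T})<b$ and $\|s\|$ by $s(\bar{T})\le 4^{k}\theta(s)<4^{k}c$, where one uses that the symmetry encoded in $\K$ makes every element nonincreasing on $[\bar{T},T_2]$, so its supremum over $[T_1,T_2]$ is attained at $\bar{T}$.

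For (A1)--(A3) I would transcribe the three estimates of Theorem~\ref{main}, replacing $T_2$ by $\bar{T}$ and $\frac{3T_1+T_2}{4}$ by $\frac{7T_1+T_2}{8}$. In (A1), for $\tau\in\left[\frac{7T_1+T_2}{8},\bar{T}\right]$ monotonicity on $[T_1,\bar{T}]$ gives $r(\tau)+s(\tau)\ge\alpha(r)+4^{-k}\psi(s)\ge a+4^{-k}d$, and restricting the integral to this subinterval (legitimate since $G\ge0$) together with (1) yields $\alpha(R(r+s))>a$. Hypotheses (2) and (3) handle the upper bounds $\beta(R(r+s))<b$ and $\theta(S(r+s))<c$ via the pointwise bounds $r(\tau)+s(\tau)\le b+4^{k}c$ for all $\tau$ (using the peak at $\bar{T}$) and $f_\downarrow(r+s)\le f_\downarrow(0)$, respectively.

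The one genuinely new wrinkle is (A4). There the bound $f_\downarrow(r(\tau)+s(\tau))\ge f_\downarrow(b+d)$ gives $\psi(S(r+s))\ge f_\downarrow(b+d)\int_{T_1}^{T_2}G(\bar{T},\tau)\,d\tau$, and since $G\ge0$ I would discard the contribution from $[\bar{T},T_2]$ to bound this below by $f_\downarrow(b+d)\int_{T_1}^{\bar{T}}G(\bar{T},\tau)\,d\tau>d$, which is exactly hypothesis (4). Alternatively, (H5) forces $G(\bar{T},\cdot)$ to be symmetric about $\bar{T}$, so $\int_{T_1}^{T_2}G(\bar{T},\tau)\,d\tau=2\int_{T_1}^{\bar{T}}G(\bar{T},\tau)\,d\tau$ and the two formulations agree up to a factor of two; this is why (4) uses the half-interval integral while (2) uses the full one.

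Finally, Lemma~\ref{fpt} delivers $(r^*,s^*)$ with $x^*=r^*+s^*$ a fixed point of $T$ in $\K$. Because membership in $\K$ already encodes $x^*(T_2-t+T_1)=x^*(t)$, no separate extension argument is needed: the fixed point is defined on all of $[T_1,T_2]$ and is symmetric by construction, while the strict lower bounds $\alpha(r^*)>a\ge0$ and $\psi(s^*)>d\ge0$ force $x^*$ to be nontrivial, hence positive. The main obstacle I anticipate is not any single estimate but the confirmation that $T$ maps $\K$ into $\K$, i.e.\ that $Tx$ inherits symmetry from $x$; this rests on the substitution $\tau\mapsto T_2-\tau+T_1$ in the defining integral combined with the kernel symmetry (H5), together with (H4) to preserve the monotonicity and cone inequality on $[T_1,\bar{T}]$. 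This is precisely the content of the cited preceding lemma, and once it is in hand the verification of (A0)--(A4) is a routine transcription of Section~3 with the constants recorded above.
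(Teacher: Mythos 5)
Your proposal is correct and follows exactly the route the paper intends: the paper omits the proof of Theorem~\ref{mainsym}, stating it is similar to that of Theorem~\ref{main}, and your transcription with the half-interval constants (the factor $4^{\pm k}$ from $\bar{T}-T_1=4\bigl(\frac{7T_1+T_2}{8}-T_1\bigr)$, the maximum of elements of $\K$ at $\bar{T}$ via symmetry, and the discarding of the $[\bar{T},T_2]$ portion of the integral in (A4) to match the half-interval hypothesis) supplies precisely the details the paper leaves to the reader. No gaps; this matches the paper's argument.
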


We present the following two corollaries of Theorem \ref{mainsym} that provide information to assist in choosing the nonnegative real numbers $a$, $b$, $c$, and $d$ in application problems.

\begin{corollary}
    Suppose the conditions of Theorem \ref{mainsym}. Then 
    \begin{enumerate}
        \item [(1)] $d<c\left(\dfrac{\ds\int_{T_1}^{\bar{T}}G(\bar{T},\tau)\;d\tau}{\ds\int_{T_1}^{T_2}G\left(\dfrac{7T_1+T_2}{8},\tau\right)d\tau}\right)$; and
        
        \item [(2)] if $d<4^{2k}c$, then $a<b\left(\dfrac{\ds\int_{\frac{7T_1+T_2}{8}}^{\bar{T}}G\left(\frac{7T_1+T_2}{8},\tau\right)d\tau}{\ds\int_{T_1}^{T_2}G(\bar{T},\tau)\;d\tau}\right).$
    \end{enumerate}
\end{corollary}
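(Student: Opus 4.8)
The plan is to mirror the proof of the corollary that follows Theorem \ref{main}, deriving both conclusions directly from the hypotheses of Theorem \ref{mainsym} by chaining inequalities through the monotonicity of $f_\downarrow$ and $f_\uparrow$. No fixed point machinery is required; everything follows from conditions (1)--(4) of Theorem \ref{mainsym} together with the standing hypothesis $b>a$.

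For part (1), I would begin from the fact that $f_\downarrow$ is decreasing and $b+d\ge 0$, so that $f_\downarrow(b+d)\le f_\downarrow(0)$. Placing hypotheses (4) and (3) of Theorem \ref{mainsym} at the two ends of this comparison gives
\[
d\left(\int_{T_1}^{\bar{T}}G(\bar{T},\tau)\,d\tau\right)^{-1}<f_\downarrow(b+d)\le f_\downarrow(0)<c\left(\int_{T_1}^{T_2}G\left(\frac{7T_1+T_2}{8},\tau\right)d\tau\right)^{-1}.
\]
Discarding the middle terms and clearing the two positive integral factors then yields conclusion (1).

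For part (2), the key preliminary step is to order the arguments of $f_\uparrow$ appearing in hypotheses (1) and (2). The assumed inequality $d<4^{2k}c$ is equivalent to $4^{-k}d<4^{k}c$, and combining this with $b>a$ produces $a+4^{-k}d<b+4^{k}c$. Since $f_\uparrow$ is increasing, this ordering gives $f_\uparrow(a+4^{-k}d)\le f_\uparrow(b+4^{k}c)$. Chaining hypotheses (1) and (2) of Theorem \ref{mainsym} through this comparison,
\[
a\left(\int_{\frac{7T_1+T_2}{8}}^{\bar{T}}G\left(\frac{7T_1+T_2}{8},\tau\right)d\tau\right)^{-1}<f_\uparrow(a+4^{-k}d)\le f_\uparrow(b+4^{k}c)<b\left(\int_{T_1}^{T_2}G(\bar{T},\tau)\,d\tau\right)^{-1},
\]
and once more clearing the integral factors delivers conclusion (2).

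The only place where more than bookkeeping occurs is the argument-ordering step in part (2): conclusion (2) is asserted only under the extra hypothesis $d<4^{2k}c$ precisely because this is what guarantees $a+4^{-k}d\le b+4^{k}c$, which is in turn what allows the monotonicity of $f_\uparrow$ to be invoked in the correct direction. I expect this to be the only real subtlety; the rest is identical in structure to the corollary following Theorem \ref{main}, with $\bar{T}$ and $\frac{7T_1+T_2}{8}$ playing the roles previously played by $T_2$ and $\frac{3T_1+T_2}{4}$.
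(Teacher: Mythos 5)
Your proof is correct and is exactly the argument the paper intends: the paper's entire proof is the remark that the corollary ``is direct from the decreasing and increasing properties of $f_\downarrow$ and $f_\uparrow$,'' and your chaining of hypotheses (4) and (3) through $f_\downarrow(b+d)\le f_\downarrow(0)$ for part (1), and of hypotheses (1) and (2) through $f_\uparrow(a+4^{-k}d)\le f_\uparrow(b+4^kc)$ (justified by $d<4^{2k}c$ and $a<b$) for part (2), is precisely that argument written out. No gap; nothing further is needed.
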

The proof of this corollary is direct from the decreasing and increasing properties of $f_\downarrow$ and $f_\uparrow$, respectfully.

\begin{corollary}
    Assume (H1), (H3)--(H5). Let $b$ and $c$ be positive real numbers. If $f_\downarrow,f_\uparrow:[0,\infty)\to[0,\infty)$ are continuous with 
    \begin{enumerate}
    \item[(C1)] $\ds f_{\uparrow}(0)>0$;
    \item[(C2)] $\ds f_{\uparrow}\left(b+4^kc\right)<b\left(\int_{T_1}^{T_2}G(\bar{T},\tau)\;d\tau\right)^{-1}$;
    
    \item[(C3)] $\ds f_{\downarrow}(0)<c\left(\int_{T_1}^{T_2}G\left(\frac{7T_1+T_2}{8},\tau\right)d\tau\right)^{-1}$; and
    
    \item[(C4)] $\ds f_{\downarrow}(b)>0$,
    \end{enumerate}
    then there exists at least one positive symmetric solution to \eqref{1}.
\end{corollary}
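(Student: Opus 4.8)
The plan is to derive this corollary as the zero-boundary specialization of Theorem~\ref{mainsym}, mirroring the way the analogous corollary in Section~3 was obtained from Theorem~\ref{main}. Concretely, I would invoke Theorem~\ref{mainsym} with the particular choice $a=0$ and $d=0$, retaining the given positive reals $b$ and $c$. With this choice the four constants $a=0$, $b>0$, $c>0$, $d=0$ are all nonnegative, and the structural requirement $b>a$ collapses to $b>0$, which holds by hypothesis; thus the ambient assumptions of Theorem~\ref{mainsym} are automatically satisfied and only its four analytic conditions remain to be matched against (C1)--(C4).

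The second step is to check that conditions~(1)--(4) of Theorem~\ref{mainsym} reduce exactly to (C1)--(C4) once $a=d=0$. In condition~(1) the argument $a+4^{-k}d$ becomes $0$ and the right-hand side $a\left(\int_{\frac{7T_1+T_2}{8}}^{\bar{T}}G\left(\frac{7T_1+T_2}{8},\tau\right)d\tau\right)^{-1}$ becomes $0$, so~(1) reads $f_\uparrow(0)>0$, which is precisely (C1). Conditions~(2) and~(3) contain neither $a$ nor $d$ and are inherited verbatim as (C2) and (C3). In condition~(4) the argument $b+d$ becomes $b$ and the right-hand side $d\left(\int_{T_1}^{\bar{T}}G(\bar{T},\tau)\,d\tau\right)^{-1}$ becomes $0$, so~(4) reads $f_\downarrow(b)>0$, which is (C4). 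Hence the hypotheses (C1)--(C4) are exactly the hypotheses of Theorem~\ref{mainsym} under $a=d=0$, and that theorem delivers a fixed point $x^*\in\K$ of $T$.

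Finally, I would record why $x^*$ is a genuinely positive symmetric solution and not the trivial one. Membership $x^*\in\K$ immediately furnishes the symmetry $x^*(t)=x^*(T_2-t+T_1)$ and nonnegativity, while the conclusion of Lemma~\ref{fpt} places the producing pair in the \emph{open} layered region, so its first coordinate $r^*$ satisfies $\alpha(r^*)>a=0$, i.e. $r^*\!\left(\frac{7T_1+T_2}{8}\right)>0$; since $x^*=r^*+s^*$ with $s^*\ge 0$, this gives $x^*\!\left(\frac{7T_1+T_2}{8}\right)>0$ and hence $x^*\not\equiv 0$. I expect the only point needing a word of care to be the strictness in~(1): letting $a\to 0$ turns a strict inequality whose right-hand side vanishes into the honest positivity demand $f_\uparrow(0)>0$, and this is legitimate precisely because the integral $\int_{\frac{7T_1+T_2}{8}}^{\bar{T}}G\left(\frac{7T_1+T_2}{8},\tau\right)d\tau$ is (as is implicit throughout Theorem~\ref{mainsym}) a finite positive number, so its reciprocal multiplied by $a=0$ is unambiguously $0$. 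Beyond this bookkeeping there is no substantive obstacle.
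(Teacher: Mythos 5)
Your proposal is correct and is exactly the paper's argument: the paper disposes of this corollary in one line by choosing $a=d=0$ in Theorem~\ref{mainsym}, which is precisely your specialization, and your verification that conditions (1)--(4) collapse to (C1)--(C4) (plus the positivity remark via $\alpha(r^*)>0$ from the open layered region) simply fills in the routine details the authors left implicit.
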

The proof of this corollary is again apparent by choosing $a=d=0$ in Theorem \ref{mainsym}.

\begin{example} We remark that any integral equation or boundary value problem referred to in \cite{EN19} can be shown to have a positive solution by applying either Theorem \ref{main} or Theorem \ref{mainsym}, since the properties of the Green's functions associated with those boundary value problems are the properties of the kernel given either in Section 3 or in Section 4. 

We consider a specific example 
\begin{equation}\label{lid1}
x^{(4)} =f(x):=f_\uparrow(x)+f_\downarrow(x), \quad t\in(0,1),
\end{equation}
satisfying the Lidstone boundary conditions 
\begin{equation}\label{lid2}
x(0)=x''(0)=x(1)=x''(1)=0.
\end{equation}
The Green's function is given by $$G(t,\tau)=\left\{\begin{array}{ll} \dfrac{\tau^3t-3\tau^2t+\tau t^3+2\tau t-t^3}{6},&0\le t\le \tau\le 1,\\\\ \dfrac{\tau^3 t-\tau^3+\tau t^3-3\tau t^2+2\tau t}{6}, & 0\le \tau\le t\le 1;\end{array}\right.$$
i.e., $x$ is a solution of \eqref{lid1}, \eqref{lid2} if and only if $x$ satisfies \eqref{1} with the given kernel $G$.
Here $G$ satisfies properties (H1) and (H3)-(H5) with $k=1$ (see \cite{EN19}).

Now $$\int^{1}_0 G\left(\frac{1}{2},\tau\right)d\tau=2\int^{1/2}_0 G\left(\frac{1}{2},\tau\right)d\tau =2\int^{1/2}_0 \left(\frac{1}{16}\tau -\frac{1}{12}\tau^3\right) d\tau=\frac{5}{384},$$
and 
\begin{align*}
\int^{1}_0 G\left(\frac{1}{8},\tau\right)d\tau&=
2\int^{1/2}_0 G\left(\frac{1}{8},\tau\right)d\tau\\ &=2\int^{1/8}_0\left(-\frac{7}{48}\tau^3+\frac{35}{1024}\tau\right)d\tau\\
&\phantom{=}+ 2\int^{1/2}_{1/8}\left(\frac{1}{48}\tau^3-\frac{1}{16}\tau^2+\frac{43}{1024}\tau-\frac{1}{3072}\right)d\tau\\
&=\frac{277}{49152}.
\end{align*}
By choosing $b=1$ and $c=\dfrac{1}{4}$, we have that by Theorem \ref{mainsym}, \eqref{lid1}, \eqref{lid2} is guaranteed to have a symmetric solution if
$$f_\uparrow(0)>0,$$
$$f_\uparrow(2)<\frac{384}{5},$$
$$f_\downarrow(0)<\frac{12288}{277},$$
and $$f_\downarrow(1)>0.$$ It is easy to find a multitude of increasing and decreasing functions that satisfy these conditions.

\end{example}

\footnotesize


\end{document}